\newcommand{\R}{\mathbb R}
\newcommand{\N}{\mathbb N}
\newcommand{\E}{\mathbb E}
\newcommand{\Pro}{\mathbb P}
\newcommand{\vol}{\mathrm{vol}}
\def\dint{\textup{d}}
\newcommand{\SSS}{\ensuremath{{\mathbb S}}}
\newcommand{\B}{\ensuremath{{\mathbb B}}}
\definecolor{kgreen}{rgb}{0, 0.42, 0}
\newtheorem{thm}{Theorem}[section]
\newtheorem{lemma}[thm]{Lemma}
\newtheorem{proposition}[thm]{Proposition}
\theoremstyle{definition}
\newtheorem{rmk}[thm]{Remark}
\def\bB{\mathbf{B}}
\def\bC{\mathbf{C}}
\def\bN{\mathbf{N}}
\def\bR{\mathbf{R}}
 	\definecolor{Kgreen}{rgb}{0.0, 0.5, 0.0}
\begin{document}


\title[Sharpening the probabilistic Arithmetic-Geometric Mean Inequality]{Sharpening the probabilistic\\ Arithmetic-Geometric Mean Inequality}

\author[Tom Kaufmann]{Tom Kaufmann}
\author[Christoph Thäle]{Christoph Thäle}
\address{Tom Kaufmann: Faculty of Mathematics, Ruhr University Bochum, Germany} \email{tom.kaufmann@rub.de}
\address{Christoph Thäle: Faculty of Mathematics, Ruhr University Bochum, Germany} \email{christoph.thaele@rub.de}

\keywords{Arithmetic-geometric mean inequality, high-dimensional convex geometry, $\ell_p^n$-ball, reverse inequality, sharp large deviations}
\subjclass[2010]{46B09, 52A23, 60F10}
%
%
%
%
\begin{abstract}
We consider the $p$-generalized arithmetic-geometric mean inequality for vectors chosen randomly from the $\ell_p^n$-ball in $\R^n$. In this setting the inequality can be improved or reversed up to a respective scalar constant with high probability, and central limit theorems and large deviation results with respect to this constant have been shown. We sharpen these large deviation results in the spirit of Bahadur and Ranga Rao, thereby providing concrete and asymptotically exact estimates on a non-logarithmic scale for the probability of the inequality being improvable or reversible up to a constant, respectively.  
\end{abstract}

\maketitle

\section{Introduction and Main Results}\label{sec:Introduction}
%
For $n\in \N$ and $x_1, \ldots, x_n \in \R^n$ the arithmetic-geometric mean (AGM) inequality states that
\begin{equation*}
{\left(\prod_{i=1}^n |x_i| \right)}^{1/n} \le \quad  { \frac{1}{n} \sum_{i=1}^n |x_i|}.
\end{equation*}
Additionally, for $p>0$ the $p$-generalized arithmetic-geometric mean ($p$-AGM) inequality expands the above for the $p$-generalized mean, i.e.\,\,for $(x_1, \ldots, x_n) \in \R^n, n\in \N$, we have
\begin{equation} \label{eq:pAGM}
{\left(\prod_{i=1}^n |x_i| \right)}^{1/n} \le \quad  {\left( \frac{1}{n} \sum_{i=1}^n |x_i|^p\right)}^{1/p}.
\end{equation}
It was shown by Gluskin and Milman \cite{GluskinMilman2003} that for a random vector $X^{(n)} \in \R^n$ uniformly distributed on the standard $(n-1)$-dimensional unit sphere $\SSS^{n-1}$ in $\R^n$, one can reverse the $p$-AGM inequality in \eqref{eq:pAGM} for $p=2$ up to a scalar constant with high probability, which was then extended to $p=1$ by Aldaz \cite{aldaz2008selfimprovemvent, aldaz2010concentration}. Kabluchko, Prochno and Vysotsky \cite{KPV} provided a central limit theorem (CLT) and a large deviation principle (LDP) for the ratio of the two sides of the $p$-AGM inequality for any $p \in [1, \infty)$ and $X^{(n)} $ uniformly distributed within the $\ell_p^n$-ball $\B_p^n$ or distributed according to the surface measure or the cone probability measure on the $\ell_p^n$-sphere $\SSS_p^{n-1}$, where 
$$\B_p^n:=\{x\in\R^n:\|x\|_p\leq 1\} \text{ \qquad and \qquad } \SSS_p^{n-1}:=\{x\in\R^n:\|x\|_p=1\},$$
with
$$
\|x\|_p := \Big(\sum\limits_{i=1}^n|x_i|^p\Big)^{1/p}.
$$
The cone probability measure on $\SSS_p^{n-1}$ is defined as 
$$\bC_{n,p}(\,\cdot\,) := {\vol_n(\{rx:r\in[0,1],x\in\,\cdot\,\})\over\vol_n(\B_p^n)},$$
where $\vol_n$ denotes the $n$-dimensional Lebesgue measure on $\R^n$. Finally, Thäle \cite{ThaeleAGM-MDP}  then expanded the results of  \cite{KPV} to a CLT and a moderate deviation principle (MDP) for the ratio of the two sides of the $p$-AGM inequality with the corresponding random vector $X^{(n)}  \in \B_p^n$ having a distribution from a wider class of $p$-radial distributions, established by Barthe, Gu\'edon, Mendelson and Naor in \cite{BartheGuedonEtAl}, which includes the uniform distribution and the cone probability measure as special cases. This class of distributions is constructed by mixing the uniform distribution and the cone probability measure via a $p$-radial density, which is given by an additional distribution on $[0,\infty)$. However, the arguments of Thäle show that the properties of interest of a random vector $X^{(n)} \in \B^n_p$ are independent of the $p$-radial component of its distribution, as long as the directional distribution is given by $\bC_{n,p}$ and its $p$-radial distribution has no atom at zero (see Section \ref{sec:ProbRep}). This means, that there is a distribution $\bR$ on $[0,1]$  with $\bR(\{0\})=0$, such that for a random variable $R$ with distribution $\bR$ and a random variable $Z^{(n)}$ with distribution $\bC_{n,p}$  that is independent of $R$, we have that 
\begin{equation}\label{eq:pRadDistr}
X^{(n)} \overset{d}{=} R \cdot Z^{(n)},
\end{equation}
where $\overset{d}{=}$ denotes equality in distribution. (This can also be expanded to sequences of $p$-radial distributions $\big(\bR^{(n)}\big)_{n_\N}$, if the limiting distribution also has no atom at zero). Thus, it follows from  \cite{ThaeleAGM-MDP} that the same CLT and MDP hold universally for the ratio of the two sides of the $p$-AGM inequality for any random vector $X^{(n)} \in \B^n_p$ with directional distribution $\bC_{n,p}$.\\

The purpose of this paper is to develop further the large deviation results of \cite{KPV} into \textit{sharp} large deviations (SLD) results in the spirit of Bahadur and Ranga Rao \cite{Bahadur}. The theory of sharp large deviations has only very recently been introduced into the field of high-dimensional convex geometry, starting with Liao and Ramanan \cite{LiaoRamanan} and followed up by Kaufmann \cite{TKSLDP}, both in the context of $\ell_p^n$-balls and -spheres. It has the distinct advantage over classical large deviations theory that it gives tail asymptotics not on a logarithmic scale and can provide concrete and asymptotically exact tail estimates for specific $n\in\N$. Moreover, just as LDP results are more sensitive to the underlying distributions than e.g.\,CLTs and MDPs, SLD results are so to an even greater extent than LDPs. For the sake of brevity, we will not recapitulate the basics of (sharp) large deviations theory here, and defer the reader to the works \cite{TKSLDP, LiaoRamanan} for an overview of the background in the current setting of $\ell_p^n$-balls, and to the classic literature \cite{Bahadur, DZ, dH, Petrov} for a more detailed account of the relevant theory.\\

For a random vector $X^{(n)}  \in \B_p^n$ with directional distribution $\bC_{n,p}$  in the sense of \eqref{eq:pRadDistr} we now want to give sharp asymptotics for the probability of the ratio of the two sides of the $p$-AGM inequality in \eqref{eq:pAGM} being bigger than a constant $\theta \in [0,1]$. To state our main result, we need to define the following functions: For $\tau=(\tau_1, \tau_2)\in \R^2$, set
\begin{equation*} 
\Lambda_p(\tau) := \log \left( \frac{1}{2p^{1/p}\Gamma\big(1+\frac{1}{p}\big)} \int_\R e^{\tau_1 \log(|y|) + \frac{1}{p}(\tau_2-1) |y|^p} \, \dint y \right),
\end{equation*}
and for  $x\in \R^2$ denote the Legendre-Fenchel transform of $\Lambda_p$ as 
$$\displaystyle \Lambda_p^*(x) := \sup_{\tau \in \R^2} \big[\langle x, \tau \rangle - \Lambda_p(\tau)\big],$$
where $\langle \, \cdot \, , \, \cdot \, \rangle$ denotes the standard scalar product in $\R^2$. We denote by $\mathcal{J}_p$ the effective domain of $\Lambda_p^*$, i.e. the set of arguments for which $\Lambda_p^*$ is finite.  For an $x \in \mathcal{J}_p$, we denote by $\tau(x) \in \R^2$ the coefficients in the above term, where the supremum is attained, i.e. where it holds that
\begin{equation*} 
\Lambda_p^*(x) =  \langle x, \tau(x) \rangle - \Lambda_p(\tau(x)).
\end{equation*}
For a detailed argument for the existence of $\tau(x)$, we refer the reader to \cite[p. 246 f.]{TKSLDP}, particularly the explanation regarding Equation (9) therein. For a function $g: \R^d \to \R^d$, we denote by $J_x g(x^*)$ the Jacobian of $g$ with respect to the vector $x$ evaluated at $x^* \in \R^d$, and for $f:\R^d \to \R$ by $\nabla_x f(x^*)$ and $\mathcal{H}_x f(x^*)$ the gradient and Hessian of $f$ with respect to the vector $x$ evaluated at $x^* \in \R^d$, respectively. Moreover, let 
$$\psi(x)= \frac{\Gamma^\prime(x)}{\Gamma(x)}, \qquad  \text{with }x>0,$$
be the digamma function. We then set 
$$m_p:= \frac{1}{p} \left(\psi \left(\frac{1}{p}\right)+ \log(p)\right) <0. $$
As we will see in Section \ref{sec:ProbRep}, $e^{m_p}$ is the limit towards which the expectations of the ratio of the $p$-AGM inequality converge in $n\in\N$. Furthermore, we need to define the functions $\xi(\theta)$ and $\kappa(\theta)$ for $\theta \in [0,1]$, as used also in the sharp large deviation results of Bahadur and Ranga Rao \cite{Bahadur}. For $x \in \R^2$, we set 
\begin{equation} \label{eq:HessX}
\displaystyle \mathfrak{H}_{x} :=  \mathcal{H}_{\tau}\Lambda_p(\tau(x))
\end{equation}
to be the Hessian of \hspace{-0.05cm}$\Lambda_p(\tau)$ in $\tau \in \R^2$, evaluated at $\tau(x)$. %
For $\theta \in (0,1]$, we denote $\theta^*:=(\log \theta,1) \in \R^2$ and for $\theta \in (0,1)$ we define
\begin{equation*} 
\displaystyle \xi(\theta)^2 := \langle \mathfrak{H}_{\theta^*} \, \tau(\theta^*), \tau(\theta^*) \rangle\, \det \mathfrak{H}_{\theta^*},
\end{equation*}
and
\begin{equation*} 
\displaystyle \kappa(\theta)^2 := 1 - c_\kappa(\theta),
\end{equation*}
with $c_\kappa(\theta)$ given by
\begin{equation*}
\frac{\left(\tau(\theta^*)_1^2 + \tau(\theta^*)_2^2\right)^{3/2} \, p^2 e^{p \theta} \theta^{-p}}{\big{|}\tau(\theta^*)_2^2 \left(\mathfrak{H}_{\theta^*}^{-1}\right)_{11} - 2 \tau(\theta^*)_1  \tau(\theta^*)_2 \left(\mathfrak{H}_{\theta^*}^{-1}\right)_{12} + \tau(\theta^*)_1^2 \left(\mathfrak{H}_{\theta^*}^{-1}\right)_{22} \big{|} \, \left(1+ p^2 e^{2 p \theta} \theta^{-2 p}\right)^{3/2}}.
\end{equation*}
\text{}\\
In the following results and throughout this paper, we denote by $o(1)$ a sequence that tends to zero as $n$ tends to $\infty$. With the necessary definitions and notation set up, we now proceed to formulate our main result.

\begin{thm}\label{thm:AGM-SLDP}
	Let $1 \le p < \infty$, $n \in \N$, and $X^{(n)}$ be a random vector in $\B^n_p$ with directional distribution $\bC_{n,p}$ in the sense of \eqref{eq:pRadDistr}. It then holds%
	\vspace{0.25cm}
	\begin{enumerate}
		\item[i)]for $\theta \in (e^{m_{p}}, 1)$ and $n$ sufficiently large that
		$$\Pro\Big[ \Big( \prod_{i=1}^n |X_i^{(n)}|\Big)^{1/n} > \theta \cdot \Big( \sum_{i=1}^n |X_i^{(n)}| \Big)^{1/p}\Big] =\displaystyle \frac{1}{\sqrt{2\pi n} \, \kappa(\theta) \xi(\theta)} \, e^{-n \, \mathcal{I}_p(\theta)} \, (1 + o(1)),$$
		\item[ii)] and for $\theta \in (0,e^{m_{p}})$ and $n$ sufficiently large that
		$$\Pro\Big[ \Big( \prod_{i=1}^n |X_i^{(n)}|\Big)^{1/n} < \theta \cdot \Big( \sum_{i=1}^n |X_i^{(n)}| \Big)^{1/p}\Big] =\displaystyle \frac{1}{\sqrt{2\pi n} \, \kappa(\theta) \xi(\theta)} \, e^{-n \, \mathcal{I}_p(\theta)} \, (1 + o(1)),$$
	\end{enumerate}
	where 
	\vspace{-0.075cm}
	\begin{eqnarray*}
		\mathcal{I}_p(\theta)&:=& [pG_p(\theta)-1]\log(\theta) 
	+ G_p(\theta)[\log G_p(\theta)-1] - \log \Gamma(G_p(\theta))\\ 
	&&+ \frac{1}{p}(1+\log(p)) + \log \Gamma\left(\frac{1}{p}\right),
	\end{eqnarray*}
	with $G_p(\theta):= H^{-1}(p\log(\theta))$, where $H:(0,\infty) \to (-\infty, 0)$ is an increasing bijection given by 
	\begin{equation}\label{eq:H}
	H(x):= \psi(x) - \log(x).
	\end{equation}
\end{thm}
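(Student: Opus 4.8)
The plan is to reduce the bivariate probability in the theorem to a classical sharp large deviation statement for a sum of i.i.d.\ two-dimensional random vectors, via the probabilistic representation from Section~\ref{sec:ProbRep}, and then to invoke the Bahadur--Ranga Rao type theorem (in its multivariate form, as worked out in \cite{TKSLDP}). Concretely, I would first use \eqref{eq:pRadDistr} together with the Schechtman--Zinn type representation of the cone measure $\bC_{n,p}$: write $Z^{(n)} \overset{d}{=} (Y_1, \ldots, Y_n)/\|(Y_1,\ldots,Y_n)\|_p$ where $Y_1, \ldots, Y_n$ are i.i.d.\ $p$-generalized Gaussians with density proportional to $e^{-|y|^p/p}$. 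The ratio in the $p$-AGM inequality then becomes
$$
\Big(\prod_{i=1}^n |X_i^{(n)}|\Big)^{1/n}\Big/\Big(\sum_{i=1}^n |X_i^{(n)}|^p\Big)^{1/p} = \exp\!\Big(\tfrac1n\sum_{i=1}^n \log|Y_i|\Big)\Big/\Big(\tfrac1n\sum_{i=1}^n |Y_i|^p\Big)^{1/p},
$$
with the random radius $R$ cancelling out entirely (this is exactly the ``independence of the $p$-radial component'' observation). Hence the event $\{(\prod|X_i^{(n)}|)^{1/n} > \theta\,(\sum|X_i^{(n)}|^p)^{1/p}\}$ is, after taking logarithms, equivalent to
$$
\tfrac1n\sum_{i=1}^n \log|Y_i| - \tfrac1p \log\!\Big(\tfrac1n\sum_{i=1}^n |Y_i|^p\Big) > \log\theta,
$$
i.e.\ the empirical mean $S_n/n$ of the i.i.d.\ vectors $V_i := (\log|Y_i|, |Y_i|^p) \in \R^2$ lands in a region determined by $\theta$.

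Next I would identify $\Lambda_p$ as the (normalized) cumulant generating function of $V_1$: a direct computation of $\E\, e^{\langle \tau, V_1\rangle}$ against the density $\tfrac{1}{2p^{1/p}\Gamma(1+1/p)} e^{-|y|^p/p}$ reproduces exactly the stated $\Lambda_p(\tau)$, with $\Lambda_p^*$ its Legendre transform and $\tau(x)$ the (unique) maximizer, whose existence and regularity are taken from \cite[p.~246\,f.]{TKSLDP}. The target region is the half-space-like set $A_\theta := \{(a,b) : a - \tfrac1p\log b > \log\theta\}$; its boundary is the curve $b = e^{p(a - \log\theta)} = e^{pa}\theta^{-p}$. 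A Lagrange/first-order analysis shows that the rate function $\inf_{x \in \overline{A_\theta}} \Lambda_p^*(x)$ is attained at a boundary point $x(\theta)$ at which the dual variable is collinear with the outer normal of $A_\theta$, and that for $\theta \in (e^{m_p},1)$ this dominating point has the explicit form $x(\theta)=(\,\cdot\,,\,\cdot\,)$ parametrized by $G_p(\theta)=H^{-1}(p\log\theta)$ through the stationarity equations; substituting back yields $\Lambda_p^*(x(\theta)) = \mathcal I_p(\theta)$ after simplification using $H(x)=\psi(x)-\log x$ and the Gamma integral. (The threshold $e^{m_p}$ is precisely the value of the ratio's limiting constant $e^{m_p} = \E_{\mathrm{lim}}$, so $\theta = e^{m_p}$ is the point where $x(\theta)$ hits the mean of $V_1$ and the deviation becomes trivial.)

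I would then quote the sharp large deviation theorem for empirical means of i.i.d.\ $\R^2$-valued vectors hitting a smooth-boundaried convex-type region — the multivariate Bahadur--Ranga Rao / Borovkov--Rogozin statement in the form used in \cite[Theorem/Section~]{TKSLDP} — which gives
$$
\Pro\big[S_n/n \in A_\theta\big] = \frac{C(\theta)}{\sqrt{2\pi n}}\, e^{-n\Lambda_p^*(x(\theta))}\,(1+o(1)),
$$
with a prefactor $C(\theta)$ built from (a) the norm of the minimizing dual vector $\tau(\theta^*)$, (b) the Hessian $\mathfrak H_{\theta^*} = \mathcal H_\tau \Lambda_p(\tau(\theta^*))$ of the cumulant generating function at the tilt — which controls the Gaussian fluctuations under the tilted (exponentially shifted) measure — and (c) the curvature of the boundary surface $\partial A_\theta$ at the dominating point. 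Matching these three ingredients against the definitions given before the theorem: the factor $\xi(\theta)^2 = \langle \mathfrak H_{\theta^*}\tau(\theta^*),\tau(\theta^*)\rangle\det\mathfrak H_{\theta^*}$ collects the Hessian-and-dual-vector contribution, while $\kappa(\theta)^2 = 1 - c_\kappa(\theta)$ encodes the correction coming from the curvature of $\partial A_\theta$ (the term $p^2 e^{p\theta}\theta^{-p}$ and $(1+p^2 e^{2p\theta}\theta^{-2p})^{3/2}$ are exactly the first and second derivatives of the boundary parametrization $b=e^{pa}\theta^{-p}$ packaged into the standard curvature formula, and $\theta^*=(\log\theta,1)$ records that the relevant tilt has second coordinate normalized to $1$). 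This yields the asserted formula $\tfrac{1}{\sqrt{2\pi n}\,\kappa(\theta)\xi(\theta)}e^{-n\mathcal I_p(\theta)}(1+o(1))$. Part~ii) is entirely symmetric: for $\theta \in (0,e^{m_p})$ one instead considers the complementary region $\{a - \tfrac1p\log b < \log\theta\}$, and since $e^{m_p}$ is the limiting mean this is again a genuine (lower) large deviation with the same analytic structure, producing the same prefactor formula with the same $\mathcal I_p(\theta)$.

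The main obstacle I anticipate is not the soft structure above but the \emph{bookkeeping in the prefactor}: one must carefully track how the general multivariate sharp-LDP prefactor specializes when the target region is not a half-space but has a curved boundary, and verify that the geometric curvature correction is exactly $\kappa(\theta)^2$. This requires (i) performing the exponential change of measure at the tilt $\tau(\theta^*)$, (ii) doing a local Gaussian approximation along the boundary in suitably chosen coordinates aligned with the normal $\tau(\theta^*)$, and (iii) integrating out the resulting Laplace-type integral over the curved boundary, where the $(\mathfrak H_{\theta^*}^{-1})_{ij}$ entries in $c_\kappa(\theta)$ appear as the variances of the tangential fluctuations. A secondary technical point is confirming that $\tau(\theta^*)$ indeed equals the dominating tilt — i.e.\ that the constrained minimizer of $\Lambda_p^*$ over $\overline{A_\theta}$ corresponds to the point $x$ with $\tau(x) = \tau(\theta^*)$, equivalently that $\theta^* = \nabla\Lambda_p(\tau(\theta^*))$ lies on $\partial A_\theta$ — which follows from the explicit stationarity equations but should be stated cleanly. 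Throughout, the regularity hypotheses needed to apply the sharp-LDP theorem (finiteness of $\Lambda_p$ in a neighborhood of $\tau(\theta^*)$, non-degeneracy of $\mathfrak H_{\theta^*}$, non-lattice nature of $V_1$) are inherited from the corresponding verifications in \cite{TKSLDP, KPV} and I would simply cite them.
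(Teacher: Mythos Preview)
Your proposal is correct and follows essentially the same route as the paper: reduce to the empirical mean $S^{(n)} = \tfrac1n\sum_i(\log|Y_i|,|Y_i|^p)$ of i.i.d.\ $p$-Gaussian functionals via the Schechtman--Zinn representation, identify the dominating boundary point $\theta^*=(\log\theta,1)$ with $\Lambda_p^*(\theta^*)=\mathcal I_p(\theta)$, and read off the prefactor from a multivariate Bahadur--Ranga Rao type expansion. The only packaging difference is that the paper does not invoke a single black-box sharp-LDP theorem but instead splits the step in two: it first establishes a local density for $S^{(n)}$ via the Adriani--Baldi saddle-point estimate (with an extra Fourier-integrability argument, borrowed from Liao--Ramanan, to compensate for the fact that $V_i$ itself is supported on a curve and has no two-dimensional density), and then integrates this density over $D_{\theta,>}$ using a Laplace approximation phrased geometrically through the Weingarten maps (curvatures) of $\partial D_{\theta,>}$ and of the level curve $\{\Lambda_p^*=\Lambda_p^*(\theta^*)\}$; the ratio of these two curvatures is precisely your $\kappa(\theta)^2$, and the remaining Hessian term is your $\xi(\theta)^2$.
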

The two parts of the above theorem describe the decay of the probabiliy that the $p$-AGM inequality is either reversible with a prefactor $\theta \in (e^{m_{p}}, 1)$ [part i)] or can be sharpened with a prefactor $\theta \in (0, e^{m_{p}})$ [part ii)]. Conversely, their respective opposites, i.e. the probabilities that the inequality can be reversed with a prefactor $\theta \in (0, e^{m_{p}})$ or sharpened  with a prefactor $\theta \in (e^{m_{p}}, 1)$ tend to 1 in $n \in \N$. This will be pointed out in further detail in Section \ref{sec:ProbRep}.\\

Note that the rate function $\mathcal{I}_p$ is not dependent on the $p$-radial distribution of $X^{(n)}$, as is also the case in \cite{KPV, ThaeleAGM-MDP}, even though SLD results usually tend to be more sensitive to the idiosyncrasies of the underlying distributions.\\

These results are consistent with the large deviation principle of Kabluchko, Prochno, and Vysotsky, as taking the logarithm of the probability in the above theorem, dividing by $n$, and then considering the limit, yields what they have shown in \cite[Theorem 1.2]{KPV}, namely that %
\begin{equation} \label{eq:KPV-LDP-Reverse}
\lim_{n\to\infty} \frac{1}{n} \log \Pro\Big[ \Big( \prod_{i=1}^n |X_i^{(n)}|\Big)^{1/n} > \theta \cdot \Big( \sum_{i=1}^n |X_i^{(n)}| \Big)^{1/p}\Big] = -\mathcal{I}_p(\theta)
\end{equation}
for $\theta \in (e^{m_{p}}, 1)$ and 
\begin{equation} \label{eq:KPV-LDP-Sharpen}
\lim_{n\to\infty} \frac{1}{n} \log \Pro\Big[ \Big( \prod_{i=1}^n |X_i^{(n)}|\Big)^{1/n} > \theta \cdot \Big( \sum_{i=1}^n |X_i^{(n)}| \Big)^{1/p}\Big] = -\mathcal{I}_p(\theta)
\end{equation}
for $\theta \in (0, e^{m_{p}})$. However, we do provide a refinement of their results, since Theorem \ref{thm:AGM-SLDP} gives estimates on a non-logarithmic scale and we can thereby give concrete and asymptotically exact probability estimates for the reversibility and improvability of the $p$-AGM inequality for a specific (sufficiently large) $n \in \N$, whereas the prefactor in Theorem \ref{thm:AGM-SLDP} vanishes on the logarithmic scale of a large deviation principle as in \eqref{eq:KPV-LDP-Reverse} and \eqref{eq:KPV-LDP-Sharpen}.
%
The proof of both Theorem \ref{thm:AGM-SLDP} will follow closely along the lines of Kaufmann \cite{TKSLDP} (and we defer to the proofs therein, where the arguments are analogue) and is structured in three steps, each of which will have a dedicated section. In Section \ref{sec:ProbRep} the ratio of the two sides of the $p$-AGM inequality, denoted as $\mathcal{R}_n$, will be reformulated in terms of so-called $p$-generalized Gaussian random vectors via well-established representation results of Rachev and Rüschendorf \cite{RachevRueschendorf} and Schechtmann and Zinn \cite{SchechtmanZinn}. Furthermore, the large deviation results of Kabluchko, Prochno, and Vysotsky  \cite{KPV} for $\mathcal{R}_n$ will be given explicitly and expanded to general distributions with directional component $\bC_{n,p}$. In Section \ref{sec:JointDensityEstimate} we will provide a local density approximation for this probabilistic representation and then prove the main result in Section \ref{sec:ProofMainResult} by integrating over the density estimate. For the latter, a geometric result for Laplace integral expansions from Adriani and Baldi \cite{AdrianiBaldi} is utilized.

\section{Probabilistic Representation}\label{sec:ProbRep}

For a random vector $X^{(n)} \in \B^n_p$ with directional distribution $\bC_{n,p}$ in the sense of \eqref{eq:pRadDistr} the main variable of interest is the ratio of the two sides of the $p$-AGM inequality given as
\begin{equation} \label{eq:DefRn}
\mathcal{R}_n := \frac{{\left(\prod_{i=1}^n |X_i^{(n)} | \right)}^{1/n}}{{\left( \frac{1}{n} \sum_{i=1}^n |X_i^{(n)} |^p\right)}^{1/p}}.
\end{equation}
We want to formulate the target probabilities $\Pro(\mathcal{R}_n > \theta)$ and $\Pro(\mathcal{R}_n < \theta)$ via a random vector $Y^{(n)}$ with generalized Gaussian distribution. In general, we say a real-valued random variable $X$ has a generalized Gaussian distribution if its distribution has Lebesgue density 
$$\displaystyle f_{\textup{gen}}(x):=  \displaystyle \frac{b}{2 \, a\, \Gamma\left(\frac{1}{b}\right)} \, e^{-\big({|x- m|}/{a}\big)^b},\qquad x\in\R,$$
where $m \in \R$ and $a,b>0$, and denote this by $X \sim {\bN}_{\textup{gen}}(m, a, b)$. For our probabilistic representation, we will specifically use the generalized Gaussian distribution $\bN_p := {\bN}_{\textup{gen}} \left(0, p^{1/p}, p \right)$, $p \in [1, \infty),$ often referred to as a $p$-generalized Gaussian distribution, with density
$$
\displaystyle f_p(x) := \frac{1}{2 \, p^{1/p} \, \Gamma\big(1+\frac{1}{p}\big)}\, e^{-{|x|^p}/{p}},  \qquad x\in\R.
$$
With this, we have the following useful representation resultfor the cone probability measure $\bC_{n,p}$ shown in \cite{RachevRueschendorf} and \cite{SchechtmanZinn}. 
\begin{proposition}\label{prop:ProbRepCnp}
Let $1 \le p<\infty$, $Y=(Y_1,\ldots,Y_n)$ be a random vector with $Y_i \sim \bN_p$ i.i.d., then
 the random vector $  {Y}/{\|Y\|_p}$ has distribution $\bC_{n,p}$ and is independent of $\|Y\|_p,$.
\end{proposition}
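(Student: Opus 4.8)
The statement to prove is Proposition \ref{prop:ProbRepCnp}, a classical representation result: if $Y = (Y_1, \ldots, Y_n)$ has i.i.d.\ $p$-generalized Gaussian coordinates, then $Y/\|Y\|_p$ is distributed according to the cone probability measure $\bC_{n,p}$ on $\SSS_p^{n-1}$, and moreover $Y/\|Y\|_p$ is independent of $\|Y\|_p$.

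\textbf{Approach.} The plan is to compute the joint density of $(Y/\|Y\|_p, \|Y\|_p)$ under a suitable change of variables and verify that it factors into a function of the direction times a function of the radius, with the directional factor being (a density for) $\bC_{n,p}$. I would parametrize $\R^n \setminus \{0\}$ by the polar-type map $y \mapsto (u, r)$ where $r = \|y\|_p$ and $u = y/r \in \SSS_p^{n-1}$, so that $y = r u$. Because the joint density of $Y$ is $\prod_{i=1}^n f_p(y_i) = (2p^{1/p}\Gamma(1+1/p))^{-n}\exp(-\|y\|_p^p/p) = (2p^{1/p}\Gamma(1+1/p))^{-n}\exp(-r^p/p)$, it depends on $y$ only through $r = \|y\|_p$. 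Hence after computing the Jacobian of the map $(u,r) \mapsto ru$ — which, by homogeneity, has the form $r^{n-1}$ times a factor $\varrho(u)$ depending only on the direction $u$ (this is exactly the surface-area-type density on $\SSS_p^{n-1}$ induced by the cone measure) — the joint density of $(U, R) := (Y/\|Y\|_p, \|Y\|_p)$ becomes
\begin{equation*}
g(u, r) = \frac{1}{(2p^{1/p}\Gamma(1+1/p))^n}\, e^{-r^p/p}\, r^{n-1}\, \varrho(u),
\end{equation*}
which visibly factors as a function of $u$ times a function of $r$; this gives both the independence of $U$ and $R$ and, after normalization, the identification of the law of $U$.

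\textbf{Key steps in order.} First, recall the standard fact (e.g.\ from Naor--Romik or the references \cite{RachevRueschendorf, SchechtmanZinn}) that the cone probability measure $\bC_{n,p}$ has, with respect to the appropriate surface measure on $\SSS_p^{n-1}$, a density proportional to $\varrho(u)$, the very factor appearing in the polar decomposition of Lebesgue measure: $\dint y = c\, r^{n-1} \varrho(u)\, \dint r\, \dint u$ on $(0,\infty) \times \SSS_p^{n-1}$. Second, push the density $\prod_i f_p(y_i)$ through this decomposition; since it depends only on $r$, the $u$-part of the joint density of $(U,R)$ is exactly $\varrho(u)$ up to a constant, so $U \sim \bC_{n,p}$. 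Third, since $g(u,r)$ is a product of a marginal in $u$ and a marginal in $r$, Fubini / the factorization criterion for independence gives that $U$ and $R$ are independent. One can alternatively avoid invoking the polar-coordinate Jacobian explicitly and instead verify the claim by a direct computation: for any bounded measurable $\phi$ on $\SSS_p^{n-1}$ and $\chi$ on $(0,\infty)$, show $\E[\phi(Y/\|Y\|_p)\chi(\|Y\|_p)] = \E[\phi(Y/\|Y\|_p)]\,\E[\chi(\|Y\|_p)]$ by substituting $y = ru$ and using that the integrand's $u$-dependence and $r$-dependence separate; then compute $\E[\phi(Y/\|Y\|_p)]$ and match it against $\int \phi\, \dint \bC_{n,p}$ using the definition $\bC_{n,p}(\cdot) = \vol_n(\{ru : r \in [0,1], u \in \cdot\})/\vol_n(\B_p^n)$, which is itself obtained by the same polar decomposition applied to $\mathbf{1}_{\B_p^n}$.

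\textbf{Main obstacle.} The only genuinely technical point is the polar-coordinate change of variables on the non-smooth set $\SSS_p^{n-1}$ and the precise identification of the induced directional factor $\varrho(u)$ with the cone measure density — one must be careful that "surface measure" and "cone measure" differ by the factor $\varrho(u)$ (they coincide only for $p=2$), and the clean way around this is to define $\bC_{n,p}$ directly via the cone over a set, as the excerpt does, so that the cone measure is \emph{by construction} the directional marginal of normalized Lebesgue measure on $\B_p^n$. With that definition in hand, the argument reduces to: (i) Lebesgue measure on $\R^n$ restricted to any cone factors as radius $\times$ direction with the direction marginal being $\bC_{n,p}$, and (ii) the $p$-Gaussian density is radial in the $\|\cdot\|_p$ sense, hence "replacing" the truncation $\mathbf 1_{\B_p^n}$ by the weight $e^{-r^p/p}$ changes only the radial marginal and leaves the directional marginal — and its independence from the radius — intact. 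This is a short argument once the bookkeeping of the polar decomposition is set up; I would cite \cite{RachevRueschendorf} and \cite{SchechtmanZinn} for the details rather than reproduce the Jacobian computation in full.
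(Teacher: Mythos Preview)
The paper does not prove this proposition at all: it is stated as a known representation result and attributed to \cite{RachevRueschendorf} and \cite{SchechtmanZinn} without further argument. Your sketch is the standard proof and is correct --- the key points being that the $p$-Gaussian density depends on $y$ only through $\|y\|_p$, and that the cone measure is \emph{by definition} the directional marginal of Lebesgue measure under the polar map $(r,u)\mapsto ru$, so the factorization is immediate; your closing remark that you would cite \cite{RachevRueschendorf} and \cite{SchechtmanZinn} rather than spell out the Jacobian is exactly what the paper does.
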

\begin{rmk}\label{prop:ExpRadialDistr}
Consider a random vector $X^{(n)} \in \B^n_p$ with directional distribution $\bC_{n,p}$ and $p$-radial distribution $\bR$ on $[0,1]$ in the sense of \eqref{eq:pRadDistr}. If $\bR$ is the Dirac measure at $1$, the overall distribution of $X^{(n)}$ is again the cone measure $\bC_{n,p}$. Choosing $\bR$ to be a beta distribution $\bB(\frac{n}{p},1)$ causes $X^{(n)}$ to be uniformly distributed in $\B^n_p$. Finally, for $m\in \N$, setting $\bR$ to be a beta distribution $\bB(\frac{n}{p},\frac{m}{p})$, the distribution of $X^{(n)}$ then corresponds to the orthogonal projection of $\bC_{n+m,p}$ on $\B^{n+m}_p$ onto its first $n$ coordinates, which for $p=m$ interestingly yields the uniform distribution on $\B^n_p$. All of these identities follow from \cite[Theorem 1 \& 2, Corollary 3]{BartheGuedonEtAl} by calculating the $p$-radial distributions of the random variables considered therein (see also \cite[Section 3]{PTTSurvey}).
\end{rmk}
It directly follows from Proposition \ref{prop:ProbRepCnp} for a random vector $X^{(n)} \in \B^n_p$ with directional distribution $\bC_{n,p}$ and $p$-radial distribution $\bR$ on $[0,1]$ in the sense of \eqref{eq:pRadDistr} that
\begin{equation} \label{eq:ProbRepRn}
\displaystyle \mathcal{R}_n \overset{d}{=} \frac{{\left(\prod\limits_{i=1}^n \left|	R \,  \frac{Y_i^{(n)}}{\|Y_i^{(n)}\|_p} \right| \right)}^{1/n}}{{\left( \frac{1}{n} \sum\limits_{i=1}^n \left|	R \,  \frac{Y_i^{(n)}}{\|Y_i^{(n)}\|_p}  \right|^p\right)}^{1/p}}
= \frac{{\left(\prod\limits_{i=1}^n \left|	Y_i^{(n)} \right| \right)}^{1/n}}{{\left( \frac{1}{n} \sum\limits_{i=1}^n \left|	Y_i^{(n)} \right|^p\right)}^{1/p}}.
\end{equation}
Thus, we see that $\mathcal{R}_n$ does not depend on the $p$-radial distribution $\bR$, which is why the rate function in the main result is universal for all random vectors in $\B^n_p$ with directional distribution $\bC_{n,p}$. This calculation also shows that the CLT and LDP established in \cite{KPV} and the MDP shown by Thäle \cite{ThaeleAGM-MDP} also hold for any random vector in $\B^n_p$ with directional distribution $\bC_{n,p}$ in the sense of \eqref{eq:pRadDistr}. In the light of the above argument in \eqref{eq:ProbRepRn}, let us present the LDP based on \cite[Theorem 1.4]{KPV} here in this more general form. 
\begin{proposition} \label{prop:LDP-Rn}
	Let $1 \le p < \infty$ and $X^{(n)}$ be a random vector in $\B_p^n$ with directional distribution $\bC_{n,p}$ in the sense of \eqref{eq:pRadDistr}. Then the sequence $(\mathcal{R}_n)_{n \in \N}$ with $\mathcal{R}_n$ as defined in \eqref{eq:DefRn} based on $X^{(n)}$ satisfies an LDP on $[0,1]$ with speed $n$ and rate function $\mathcal{I}_p$ as in Theorem \ref{thm:AGM-SLDP}.
\end{proposition}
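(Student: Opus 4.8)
The plan is to derive the LDP for $(\mathcal{R}_n)_{n\in\N}$ from the representation \eqref{eq:ProbRepRn}, which reduces the problem to a statement about i.i.d.\ $p$-generalized Gaussian variables, and then invoke the corresponding result \cite[Theorem 1.4]{KPV} together with the contraction principle. First I would note that, thanks to \eqref{eq:ProbRepRn}, the sequence $(\mathcal{R}_n)_{n\in\N}$ built from \emph{any} random vector $X^{(n)}\in\B_p^n$ with directional distribution $\bC_{n,p}$ in the sense of \eqref{eq:pRadDistr} has exactly the same distribution as the sequence built from $Y=(Y_1,\ldots,Y_n)$ with $Y_i\sim\bN_p$ i.i.d. Hence it suffices to prove the LDP in the latter, $p$-radial-free, setting; this is precisely the content of \cite[Theorem 1.4]{KPV} (for the cone measure, equivalently the normalized $p$-Gaussian vector), which gives an LDP on $[0,1]$ with speed $n$ and a rate function that I would then identify with $\mathcal{I}_p$ as written in Theorem \ref{thm:AGM-SLDP}.

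The key steps, in order, are: (1) invoke \eqref{eq:ProbRepRn} to replace $\mathcal{R}_n$ by the explicit functional $\big(\prod_{i=1}^n|Y_i|\big)^{1/n}\big/\big(\tfrac1n\sum_{i=1}^n|Y_i|^p\big)^{1/p}$ of i.i.d.\ $\bN_p$ variables; (2) write $\log\mathcal{R}_n = \tfrac1n\sum_{i=1}^n\log|Y_i| - \tfrac1p\log\!\big(\tfrac1n\sum_{i=1}^n|Y_i|^p\big)$, so that $\mathcal{R}_n$ is a continuous function of the pair of empirical averages $\big(\tfrac1n\sum_i\log|Y_i|,\ \tfrac1n\sum_i|Y_i|^p\big)$; (3) apply Cramér's theorem in $\R^2$ to this pair of averages — the relevant cumulant generating function is exactly $\Lambda_p$ as defined in the introduction, with the shift reflecting the $e^{-|y|^p/p}$ factor in $f_p$ — obtaining a joint LDP with rate function $\Lambda_p^*$ on the effective domain $\mathcal{J}_p$; (4) push this forward through the continuous map $(u,v)\mapsto \exp(u - \tfrac1p\log v)$ via the contraction principle, yielding an LDP for $(\mathcal{R}_n)$ on $[0,1]$ with rate function $t\mapsto \inf\{\Lambda_p^*(x): x=(x_1,x_2),\ \exp(x_1 - \tfrac1p\log x_2)=t\}$; (5) carry out the variational problem in (4) explicitly — the optimizer over the one-parameter constraint set leads, after using $\nabla\Lambda_p$ and the digamma identities, to the closed form involving $G_p(\theta)=H^{-1}(p\log\theta)$ with $H(x)=\psi(x)-\log x$, matching $\mathcal{I}_p$. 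Most of this is already carried out in \cite{KPV}; the present proposition's only genuinely new content is the observation that the $p$-radial component is irrelevant, which is immediate from \eqref{eq:ProbRepRn}, so I would phrase the proof largely as a reduction plus a citation, and include the variational identification only if it is not already recorded in \cite{KPV} in the form of $\mathcal{I}_p$.

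Two routine technical points deserve a mention rather than a computation: one should check that $\Lambda_p$ is finite in a neighbourhood of the origin (steep enough for Cramér/Gärtner–Ellis), which follows from the Gaussian-type decay $e^{-|y|^p/p}$ dominating the $e^{\frac1p(\tau_2-1)|y|^p}$ perturbation for $\tau_2<1$ and from the mild $\log|y|$ singularity at $0$ being $L^1$; and one should confirm that the map $(u,v)\mapsto u-\tfrac1p\log v$ is continuous on the region where $v>0$, which is where the empirical $p$-th moment concentrates (it is bounded below away from $0$ with overwhelming probability), so the contraction principle applies on the relevant range and the rate function is finite exactly on $[0,1]$, vanishing at $t=e^{m_p}$.

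The main obstacle, to the extent there is one, is the explicit solution of the constrained variational problem in step (5) and verifying it produces the stated $\mathcal{I}_p$ with the $H^{-1}$ structure; but since this is already done in \cite{KPV}, the real work here is merely to certify that nothing in their argument used the specific (cone) $p$-radial law beyond what \eqref{eq:ProbRepRn} already neutralizes, and to state the LDP in the claimed generality. I would therefore present the proof as: reduce via \eqref{eq:ProbRepRn}, cite \cite[Theorem 1.4]{KPV} for the resulting i.i.d.\ functional, and remark that the rate function there coincides with $\mathcal{I}_p$ of Theorem \ref{thm:AGM-SLDP}.
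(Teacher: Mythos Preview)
Your proposal is correct and follows essentially the same route as the paper: the paper likewise observes via \eqref{eq:ProbRepRn} that the $p$-radial component drops out, then records Proposition~\ref{prop:LDP-Rn} as the LDP of \cite[Theorem~1.4]{KPV} in this more general form, recalling that it is obtained there by applying Cram\'er's theorem to the empirical averages $S^{(n)}=\tfrac{1}{n}\sum_i(\log|Y_i|,|Y_i|^p)$ and pushing forward through $F(x_1,x_2)=e^{x_1}x_2^{-1/p}$ via the contraction principle, with the constrained infimum of $\Lambda_p^*$ attained uniquely at $\theta^*=(\log\theta,1)$ and equal to $\mathcal{I}_p(\theta)$. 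Your steps (1)--(5) mirror this outline exactly, so a short reduction-plus-citation, as you suggest, is precisely what the paper does.
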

It is furthermore shown in \cite{KPV} that $\mathcal{I}_p(e^{m_p})=0$ and $\mathcal{I}_p(0+) = \mathcal{I}_p(1-) = +\infty$, where $\mathcal{I}_p(0+)$ and $\mathcal{I}_p(1-)$ denote the limits of $\mathcal{I}_p$ to for sequences that converge to $0$ and $1$ from above and below, respectively. As suggested by the central limit result in \cite[Theorem 1.1]{KPV}, the expectations of $\mathcal{R}_n$ converge to $e^{m_p}$, i.e. the value from which deviation probabilities are given in the above LDP and by the SLD results in this paper.\\

For the $p$-AGM inequality this means two things: On the one hand, for sufficiently large $n\in \N$, it can be reversed with prefactors in $(0, e^{m_p})$ with high probability, and for prefactors in $(e^{m_p},1)$ the decay of this ``reversion probability'' is described by the rate function in Theorem \ref{thm:AGM-SLDP} i). On the other hand, for sufficiently large $n\in \N$, it can be improved with prefactors in $(e^{m_p},1)$ with high probability, with the decay of this ``improvement probability'' for prefactors in $(0, e^{m_p})$ being described by the rate function in Theorem \ref{thm:AGM-SLDP} ii).\\ 

Proposition \ref{prop:LDP-Rn} is proven in \cite{KPV} by showing an LDP for the sequence of empirical averages of the coordinates of the random vector
\begin{equation} \label{eq:DefVi}
V^{(n)} := \left(V_1^{(n)}, \ldots, V_n^{(n)}\right), \qquad \text{with} \qquad  V_i^{(n)} := \left( \log |Y_i^{(n)}|,  |Y_i^{(n)}|^p \right),
\end{equation}
with $Y_i^{(n)}$ i.i.d. and $Y_i^{(n)} \sim \bN_p$. This is done via Cram\'er's theorem (see e.g. \cite[Theorem 2.2.30, Theorem 6.1.3, Corollary 6.1.6]{DZ}), i.e. by showing that the cumulant generating function $\Lambda_{V}(\tau):= \E e^{\langle \tau, V_i^{(n)}\rangle}, \tau \in \R^2$ of the $V_i^{(n)}$ is finite in a neighbourhood of the origin, hence the sequence of empirical averages of the coordinates
\begin{equation} \label{eq:DefSn}
S^{(n)} := \frac{1}{n} \sum \limits_{i=1}^n V_i^{(n)} = \frac{1}{n} \sum \limits_{i=1}^n \left( \log |Y_i^{(n)}|,  |Y_i^{(n)}|^p \right) 
\end{equation}
satisfies an LDP with speed $n$ and rate function $\Lambda_{V}^*$, which is the Legendre-Fenchel transform of $\Lambda_{V}$. Note that $\Lambda_{V}= \Lambda_p$ and $\Lambda_{V}^*= \Lambda_p^*$. This LDP is then mapped to the sequence $(\mathcal{R}_n)_{n \in \N}$ via the representation result from Proposition \ref{prop:ProbRepCnp} and the contraction principle (see e.g. \cite[Theorem 4.2.1]{DZ}), considering the map $F(x_1, x_2) := e^{x_1}{x_2}^{-1/p}$, yielding an LDP for $(\mathcal{R}_n)_{n \in \N}$ with speed $n$ and rate function  
$$\inf\limits_{(x_1, x_2): \,  F(x_1, x_2)=\theta} \, \Lambda_p^*(x_1, x_2), \qquad  \theta \in [0,1].$$
This is then finalized by showing that the above infimum is attained uniquely at $\theta^* :=(\log \theta, 1)$ and that this infimum can be given explicitly as
\begin{equation} \label{eq:EindInfLambda*}
\inf\limits_{(x_1, x_2): \,  F(x_1, x_2)=\theta} \, \Lambda_p^*(x_1, x_2) =  \Lambda_p^*(\theta^*) = \mathcal{I}_p(\theta). 
\end{equation}
Further, it is shown that the effective domain of $\mathcal{I}_p$ is $(0,1)$ and that for $x \in \mathcal{J}_p$ it holds that 
\begin{equation} \label{eq:ValuesTau}
\tau(x)= \Big( p \,H^{-1}(px_1 - \log x_2)-1,  \frac{1}{p} - x_2^{-1}\,H^{-1}(px_1 - \log x_2)\Big)
\end{equation}
with $H$ as in \eqref{eq:H} (see \cite[p. 11 f.]{KPV}). We will use the same probabilistic representations from \eqref{eq:DefVi} and \eqref{eq:DefSn}, but proceed with them in a different fashion. It holds that 
\begin{equation} \label{eq:ReformTargetProbs}
\Pro(\mathcal{R}_n > \theta) = \Pro(S^{(n)} \in D_{\theta, >}) \quad \text{and} \quad \Pro(\mathcal{R}_n < \theta) = \Pro(S^{(n)} \in D_{\theta, <}),
\end{equation}
with 
\begin{eqnarray} \label{eq:DefD_theta>}
D_{\theta, >} := \{x \in \R^2: x_2 > 0, e^{x_1}{x_2}^{-1/p} > \theta\}, 
\end{eqnarray}
and 
\begin{eqnarray} \label{eq:DefD_theta<}
D_{\theta, <} := \{x \in \R^2: x_2 > 0, e^{x_1}{x_2}^{-1/p} < \theta\}.
\end{eqnarray}
%
\begin{rmk} \label{rmk:UniquenessInf}
	Note, that the points satisfying the infimum condition $F(x_1, x_2) = \theta$ in \eqref{eq:EindInfLambda*} are exactly those on the boundary $\{(x_1, x_2) \in \R^2: x_2 > 0, e^{x_1}{x_2}^{-1/p} = \theta \}$ of $D_{\theta, >}$ and $D_{\theta, >}$ (which coincide). Hence, \eqref{eq:EindInfLambda*} shows that the infimum of $\Lambda_p^*$ over this boundary is uniquely attained at $\theta^*$. 
\end{rmk}
As mentioned in the introduction, we will proceed to give an asymptotic density approximation $h^{(n)}$ for $S^{(n)}$ in the following section, such that for sufficiently large $n \in \N$ we can write the probabilities in \eqref{eq:ReformTargetProbs} as integrals of $h^{(n)}$ over $D_{\theta, >}$ and $D_{\theta, <}$. The integral approximation result by Adriani and Baldi \cite{AdrianiBaldi}, which we use to calculate that integral explicitly in the final section, will then show that the values of the above integrals are heavily dependent of the geometric form of the boundaries of $D_{\theta, >}$ and $D_{\theta, <}$. Thus, the direct influence of the mapping $F(x_1, x_2) := e^{x_1}{x_2}^{-1/p}$ on the LDP through the contraction principle will have a more subtle form in the SLD setting, as it influences the central results via the geometric properties of its graph, seen as the local boundary of the sets $D_{\theta, >}$ and $D_{\theta, <}$.%
%
%
%
\section{Density Approximation}\label{sec:JointDensityEstimate}
The sought-after density approximation for $S^{(n)} = \frac{1}{n} \sum _{i=1}^n ( \log |Y_i^{(n)}|,  |Y_i^{(n)}|^p)$ can be derived by the original result of Borovkov and Rogozin \cite[Theorem 1]{BorovkovRogozin}, however we use a more convenient version of it from \cite[Theorem 3.1]{AdrianiBaldi}. For a sum of i.i.d.\,\,random vectors with bounded common density it provides a local density estimate for their empirical averages. We will not delve too deep into the inner workings of the proof, however we will outline its core idea, so as to argue why this result is still applicable to $S^{(n)}$, even though the random vectors $( \log |Y_i^{(n)}|,  |Y_i^{(n)}|^p)$ clearly have no common bounded density in $\R^2$, as their coordinates are highly dependent.  We start off by stating the result itself in the two-dimensional setting, using the identities in \cite[Equation (2.3)]{AdrianiBaldi}.
\begin{proposition}\label{prop:DensityEstimateAdrianiBaldi}
	Let $(X_n)_{n\in\N}$ be a sequence of $\R^2$-valued random vectors. Assume that their common law $\mu$ has a bounded density with respect to the Lebesgue measure and that their Laplace transforms are finite in a neighbourhood of the origin. Then, for every  for every $x$ in the effective domain of the Legendre-Fenchel transform of the cumulant generating function $\Lambda$, $S^{(n)} := \frac{1}{n} \sum_{i=1}^n  X_n$ has a density $h^{(n)}$ with
	$$ h^{(n)}(x) = {\frac{n}{2\pi}} \, {(\det \mathfrak{H}_{x})}^{-1/2} \,  e^{- n \, \Lambda^*(x)} (1+o(1)),$$
	where $\mathfrak{H}_{x}:=  \mathcal{H}_{\tau}\Lambda_p(\tau(x))$ as in \eqref{eq:HessX}. %
\end{proposition}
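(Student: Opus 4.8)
\emph{Proof idea.} The assertion is the two-dimensional case of the classical local large-deviation density estimate of Borovkov and Rogozin, and in the precise form stated it coincides with \cite[Theorem 3.1]{AdrianiBaldi} (whose Equation~(2.3) supplies the translation of constants), so strictly speaking nothing beyond a citation is required. Still, the plan I would follow to prove it from scratch rests on the Esscher change of measure. First I would fix $x$ in the interior of the effective domain of $\Lambda^*$ — in particular any point where $\tau(x)$ exists in the sense recalled before the statement, cf.\ \cite[p.~246~f.]{TKSLDP} — and use the strict convexity and smoothness of $\Lambda$ on the interior of its domain (consequences of the finiteness of the Laplace transform near $0$ together with the absolute continuity of $\mu$) to record that $\nabla_\tau\Lambda(\tau(x)) = x$ and $\Lambda^*(x) = \langle x,\tau(x)\rangle - \Lambda(\tau(x))$. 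Then I would introduce the conjugate probability measure $\mu_{\tau(x)}$ with Lebesgue density $y\mapsto e^{\langle\tau(x),y\rangle - \Lambda(\tau(x))}\,\tfrac{\dint\mu}{\dint y}(y)$; differentiating $\Lambda$ twice shows that under $\mu_{\tau(x)}$ the summands are i.i.d.\ with mean $x$ and positive definite covariance $\mathcal{H}_\tau\Lambda(\tau(x)) = \mathfrak{H}_x$.

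The second step is the change-of-measure identity at the level of densities. Since $\mu$, hence $\mu_{\tau(x)}$, is absolutely continuous, the empirical mean $S^{(n)} = \tfrac1n\sum_{i=1}^n X_i$ has (for $n$ large, continuous) Lebesgue densities $h^{(n)}$ and $h^{(n)}_{\tau(x)}$ under the two measures, and the product form of the tilt gives, for Lebesgue-almost every $s\in\R^2$,
$$ h^{(n)}(s) = e^{-n[\langle\tau(x),s\rangle - \Lambda(\tau(x))]}\, h^{(n)}_{\tau(x)}(s). $$
Evaluating at $s = x$ produces precisely the factor $e^{-n\Lambda^*(x)}$, so the task reduces to showing $h^{(n)}_{\tau(x)}(x) = \tfrac{n}{2\pi}(\det\mathfrak{H}_x)^{-1/2}(1 + o(1))$.

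The third step, and the one I expect to be the genuine obstacle, is this local central limit theorem for the i.i.d.\ sum under $\mu_{\tau(x)}$, evaluated at the mean. The boundedness hypothesis on the density of $\mu$ is exactly what makes it work: since $\tfrac{\dint\mu}{\dint y}$ is bounded and $\mu$ has a finite Laplace transform near the origin, the function $y\mapsto e^{\langle\tau(x),y\rangle}\tfrac{\dint\mu}{\dint y}(y)$ lies in $L^1(\R^2)$, so by Riemann--Lebesgue the characteristic function of $\mu_{\tau(x)}$ vanishes at infinity; combined with the finite, positive definite covariance $\mathfrak{H}_x$ this is Cramér's condition, and Gnedenko's local CLT (equivalently, Fourier inversion together with dominated convergence) yields
$$ \Big(\tfrac{2\pi}{n}\Big)(\det\mathfrak{H}_x)^{1/2}\, h^{(n)}_{\tau(x)}(s) \longrightarrow \exp\!\Big(-\tfrac{n}{2}\big\langle\mathfrak{H}_x^{-1}(s - x),\, s - x\big\rangle\Big) $$
uniformly for $s$ in a neighbourhood of $x$. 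At $s = x$ the right-hand side equals $1$, which gives the claimed value of $h^{(n)}_{\tau(x)}(x)$, and combined with the previous display this proves the proposition.

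In the paper itself this third step requires no new argument: Proposition~\ref{prop:DensityEstimateAdrianiBaldi} is invoked directly from \cite[Theorem 3.1]{AdrianiBaldi} — a convenient packaging of \cite[Theorem 1]{BorovkovRogozin} — so the only remaining matters are to verify that its hypotheses hold in the form stated here and to identify, via \cite[Equation (2.3)]{AdrianiBaldi}, the matrix $\mathfrak{H}_x$ with $\mathcal{H}_\tau\Lambda(\tau(x))$. The more delicate point for the present application — that the vectors $(\log|Y_i^{(n)}|,|Y_i^{(n)}|^p)$ do \emph{not} literally possess a bounded common density in $\R^2$, so Proposition~\ref{prop:DensityEstimateAdrianiBaldi} is applied through the change of variables described in Section~\ref{sec:JointDensityEstimate} — is addressed there and does not affect the statement above.
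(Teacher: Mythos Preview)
Your proposal is correct in recognizing that the proposition is quoted from \cite[Theorem~3.1]{AdrianiBaldi} and needs no independent proof here; the paper does precisely this, and the paragraph following the statement offers only a sketch of the underlying technique. Where you diverge is in the sketch itself: the paper describes the \emph{saddle point method} --- Fourier inversion followed by a Cauchy contour shift through the complex saddle $\tau(x)$, with the integral mass concentrating there as $n\to\infty$ --- whereas you present the \emph{probabilistic} counterpart via Esscher tilting and a local CLT under the tilted law $\mu_{\tau(x)}$. These are two faces of the same argument (the saddle point is exactly the tilting parameter, and the Gaussian contribution at the saddle is the covariance $\mathfrak{H}_x$ of the tilted law), so neither is more general; the complex-analytic formulation is closer to the sources \cite{BorovkovRogozin, AdrianiBaldi}, while your tilting version makes the role of the bounded-density hypothesis somewhat more transparent.

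One small imprecision in your third step: invoking Riemann--Lebesgue and ``Cram\'er's condition'' alone does not suffice for the \emph{density} form of the local CLT; what the bounded-density assumption actually buys is that some power of the (tilted) characteristic function lies in $L^1(\R^2)$ --- since $f$ bounded and in $L^1$ implies $f\in L^2$, Plancherel gives $\hat f\in L^2$, hence $|\hat f|^2\in L^1$ --- and this is what Fourier inversion genuinely requires. That is also the content of \cite[Remark~3.2]{AdrianiBaldi}, which the paper leans on to bypass the bounded-density hypothesis when passing from Proposition~\ref{prop:DensityEstimateAdrianiBaldi} to Proposition~\ref{prop:DensityEstimateSn}.
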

This is proven using the so called saddle point method (see e.g.\,\,Jensen \cite{JensenBook} for details), which provides a local density of the empirical average via the Fourier transform of the involved i.i.d.\,\,random vectors, which are often easier to obtain than their actual densities. Using the Fourier inversion theorem, the (unknown) density of the empirical average is written as a complex integral of the Fourier transform. Cauchy's theorem then makes is possible to transform the path of integration in such a way that it passes through a complex saddle point, such that the mass of the integral heavily concentrates in $n \in \N$ around this saddle point. Then, well-established integral approximation techniques can be used with high accuracy, to write the density explicitly.\\

The above process of course requires the involved Fourier transform to be integrable in order to apply the Fourier inversion theorem, which is ensured in \cite{AdrianiBaldi} by the i.i.d.\,\,random vectors having a bounded common density. It is, however, remarked in \cite[Remark 3.2]{AdrianiBaldi}, that any other argument to the same effect could be applied as well. In the context of this paper, the Fourier transform of the random vectors $V^{(n)}_i :=(\log |Y_i^{(n)}|, |Y_i^{(n)}|^p)$ can be obtained via the underlying distribution $\bN_p$ of the random variables $Y_i^{(n)}$. 
One can then deduce the integrability of the Fourier transform of their empirical average for sufficiently large $n\in \N$, i.e.\,\,for $n>n_0$, which is dependent on $x$ and $p$, by using the properties of the density $f_p$ of $\bN_p$ and applying the Hausdorff-Young inequality. Since this was done in detail by Liao and Ramanan in \cite[Lemma 6.1]{LiaoRamanan} for a sequence of random vectors that is very similar to ours, the proof will be completely analogue and we refrain from reiterating it here. Thus, despite the lack of a bounded density for the $V^{(n)}_i$, Proposition \ref{prop:DensityEstimateAdrianiBaldi} can still be applied (with the added condition of $n \in \N$ being sufficiently large), and yields the following proposition. 

\begin{proposition}\label{prop:DensityEstimateSn}
Let $p \in [1, \infty)$ and $n\in \N$. For $S^{(n)} = \frac{1}{n} \sum_{i=1}^n  V^{(n)}_i$ with $V^{(n)}_i = (\log{|Y^{(n)}_i|}, |Y^{(n)}_i|^p),$ $\, Y_i^{(n)} \sim \bN_p$ i.i.d., $x \in \mathcal{J}_p$, and $n$ sufficiently large, it holds that the distribution of $S^{(n)} $ has Lebesgue density
	$$ h^{(n)}(x) = \frac{n}{2\pi} \, {(\det \mathfrak{H}_{x})}^{-1/2} \,  e^{- n \, \Lambda_p^*(x)} \, (1 + o(1)),$$
	with $\mathfrak{H}_{x}$ as in \eqref{eq:HessX}.%
\end{proposition}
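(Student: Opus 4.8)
The plan is to obtain this as an application of Proposition~\ref{prop:DensityEstimateAdrianiBaldi}, the sole obstruction being that the random vectors $V_i^{(n)} = (\log|Y_i^{(n)}|,\,|Y_i^{(n)}|^p)$ do \emph{not} possess a bounded Lebesgue density on $\R^2$: since $|Y_i^{(n)}|^p = e^{p\log|Y_i^{(n)}|}$ almost surely, their common law is carried by the curve $\{(t,e^{pt}):t\in\R\}$ and is singular with respect to the two-dimensional Lebesgue measure, so Proposition~\ref{prop:DensityEstimateAdrianiBaldi} does not apply verbatim. As recalled in the text, however, the bounded-density hypothesis enters the saddle-point proof of that result only to guarantee that the (suitably tilted) Fourier transform of $S^{(n)}$ is integrable, so that the Fourier inversion formula is available; any other route to that integrability serves just as well, see \cite[Remark~3.2]{AdrianiBaldi}. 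I would therefore supply this integrability directly for $n$ large, and then let the remainder of the saddle-point argument run unchanged.

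First I would note that the cumulant generating function of $V_i^{(n)}$ is exactly $\Lambda_p$, which by the computation reproduced in Section~\ref{sec:ProbRep} (following \cite{KPV}) is finite in a neighbourhood of the origin, so the second hypothesis of Proposition~\ref{prop:DensityEstimateAdrianiBaldi} holds and, for $x\in\mathcal{J}_p$, the exponential tilt with parameter $\tau(x)$ — the point where the supremum defining $\Lambda_p^*(x)$ is attained — is well defined and makes $x$ the mean of the tilted law. Using the explicit density $f_p$ of $\bN_p$ one computes the characteristic function
$$
\varphi(t_1,t_2) := \E\!\left[e^{i t_1\log|Y_1^{(n)}| + i t_2 |Y_1^{(n)}|^p}\right]
= \frac{1}{p^{1/p}\,\Gamma\!\big(1+\tfrac1p\big)}\int_0^\infty y^{i t_1}\,e^{(i t_2 - \frac1p)y^p}\,\dint y ,
$$
and, after the tilt (which multiplies the density by a factor bounded on the relevant range and does not worsen its tail decay), the characteristic function of $S^{(n)}$ is the corresponding $n$-fold power, with the $1/n$ scaling of the argument. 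One then shows it lies in $L^1(\R^2)$ as soon as $n$ exceeds some $n_0 = n_0(x,p)$, by combining the decay and regularity properties of $f_p$ with the Hausdorff–Young inequality. This is precisely the content of Liao and Ramanan \cite[Lemma~6.1]{LiaoRamanan}, carried out there for a sequence of random vectors of the same structure; the argument transfers essentially word for word, so I would establish it by reference.

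With the integrability secured, the remaining steps — tilting by $\tau(x)$, expanding the tilted characteristic function to second order at the origin, applying Fourier inversion together with dominated convergence, and undoing the tilt — are exactly those of the saddle-point method as executed in \cite{AdrianiBaldi} (equivalently in the original \cite{BorovkovRogozin}), and they yield
$$
h^{(n)}(x) = \frac{n}{2\pi}\,(\det\mathfrak{H}_x)^{-1/2}\,e^{-n\Lambda_p^*(x)}\,(1+o(1)),
$$
where $\mathfrak{H}_x = \mathcal{H}_\tau\Lambda_p(\tau(x))$ is the Hessian of $\Lambda_p$ at $\tau(x)$, which equals the covariance matrix of the tilted law; this matrix is positive definite — so its determinant is positive and the square root is meaningful — because $|Y_1^{(n)}|^p$ is a strictly convex, hence non-affine, function of $\log|Y_1^{(n)}|$, and the law is therefore not carried by any affine line in $\R^2$. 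Throughout one uses $\Lambda_V = \Lambda_p$ and $\Lambda_V^* = \Lambda_p^*$. The one genuine difficulty is the Fourier-integrability step for finite $n$; everything else is either already in the excerpt or a transcription of \cite{AdrianiBaldi} and \cite{LiaoRamanan}, which is why I would keep it at the level of a citation.
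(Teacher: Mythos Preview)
Your proposal is correct and follows essentially the same route as the paper: both recognise that the $V_i^{(n)}$ are supported on a curve and hence lack a bounded density, invoke \cite[Remark~3.2]{AdrianiBaldi} to reduce the issue to integrability of the (tilted) Fourier transform for large $n$, and then supply that integrability by the Hausdorff--Young argument of \cite[Lemma~6.1]{LiaoRamanan} before letting the saddle-point machinery of Proposition~\ref{prop:DensityEstimateAdrianiBaldi} run. Your additional remarks on the explicit characteristic function and on the positive definiteness of $\mathfrak{H}_x$ are not in the paper's discussion but are helpful and correct.
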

\section{Proof of Main Result }\label{sec:ProofMainResult}
Assuming the set-up of Theorem \ref{thm:AGM-SLDP} and combining the probabilistic representation results in \eqref{eq:ProbRepRn} and \eqref{eq:ReformTargetProbs} with the local density approximation in Proposition \ref{prop:DensityEstimateSn}, we get that 
\begin{eqnarray} \label{eq:ReformTargetProbAsDensityInt>}
\nonumber \Pro(\mathcal{R}_n > \theta) = \Pro(S^{(n)} \in D_{\theta, >})&=& \int_{D_{\theta, >}} h^{(n)}(x) \, \dint x\\
&=& \frac{n}{2\pi} \, \int_{D_{\theta, >}} {(\det \mathfrak{H}_{x})}^{-1/2} \,  e^{- n \, \Lambda_p^*(x)} \,  \dint x \, (1 + o(1)),
\end{eqnarray}
and 
\begin{eqnarray} \label{eq:ReformTargetProbAsDensityInt<}
\nonumber\Pro(\mathcal{R}_n < \theta) = \Pro(S^{(n)} \in D_{\theta, <})&=& \int_{D_{\theta, >}} h^{(n)}(x) \,\dint x\\
&=& \frac{n}{2\pi} \, \int_{D_{\theta, <}} {(\det \mathfrak{H}_{x})}^{-1/2} \,  e^{- n \, \Lambda_p^*(x)} \, \dint x \, (1 + o(1)),
\end{eqnarray}
with $D_{\theta, >}$ and $D_{\theta, <}$ as in \eqref{eq:DefD_theta>} and \eqref{eq:DefD_theta<}. The final step of the proof of our main result now is to calculate the above integrals explicitly. We will only do this in detail for the integral in \eqref{eq:ReformTargetProbAsDensityInt>}, as the calculation for the integral in \eqref{eq:ReformTargetProbAsDensityInt<} proceeds in a mostly analogue fashion, and we will merely point out the specific differences at the end of the proof. As in \cite{AdrianiBaldi, TKSLDP, LiaoRamanan}, the first step is to split up the integration area into a neighbourhood around the point $\theta^*$, at which the exponent in the integrand attains its infimum on the boundary of $D_{\theta, >}$, and its complement. On this neighbourhood we then employ a geometric result for Laplace integral approximations by Adriani and Baldi \cite{AdrianiBaldi}, and on the complement we use the large deviation principle from Proposition \ref{prop:LDP-Rn} to show the comparative negligibility of the corresponding integral. The result for Laplace integral approximations is geometric in the sense that it approximates the integral using the Weingarten maps of the $\theta^*$-level set of $\Lambda_p^*$ and the boundary of $D_{\theta, >}$ within the chosen neighbourhood, both seen as planar curves.\\

For a brief recapitulation of the Weingarten map, we refer to \cite[Section 4]{AdrianiBaldi} or \cite[Section 2.5]{TKSLDP}, and to \cite{Hicks, Klingenberg} for a more comprehensive resource. We will simply note that in two-dimensional space, the Weingarten map of a curve at a given point is simply the absolute value of its curvature at this point. Hence, we introduce the following derivative notation and recall formulas for curvatures of two types of planar curves: For a map $f:\R^d \to \R$ and some $x^* \in \R^d, d\in \N,$ we use the multi-index notation
\begin{equation} \label{eq:AblNotation}
f_{[i_1, \ldots, i_d]}(x^*) := \displaystyle \frac{\partial^{i_1}}{\partial x_1^{i_1}} \ldots \frac{\partial^{i_d}}{\partial x_d^{i_d}} \, f (x)\Big{|}_{x=x^*},
\end{equation}
with $i_1, \ldots, i_d \in \N$. The following lemma provides formulas for the curvature of planar curves, specifically for implicit curves, that is, curves given as the zero set of a function, and for curves that are the graph of a function. Both follow from the curvature formula given by Goldman in \cite[Proposition 3.1]{Goldman2005}.
\begin{lemma} \label{lem:Curvature}\text{}
	\begin{itemize}%
		\item[i)]{Let $F: \R^2 \to \R$ be a twice differentiable function. For a curve $\mathscr{C}:= \{x \in \R^2: F(x) =0\}$ given as the zero set of $F$, and a point $p \in \mathscr{C}$, where $\nabla_{x} F(p) \ne 0$, it holds for the curvature $K$ of $\mathscr{C}$ in $p$ that
			$$K(p) = \displaystyle \frac{{F_{[0,1]}}^2{F_{[2,0]}} - 2{F_{[0,1]}}{F_{[1,0]}}{F_{[1,1]}} + {F_{[1,0]}}^2{F_{[0,2]}}}{{\left({F_{[1,0]}}^2 + {F_{[0,1]}}^2\right)}^{3/2}},$$ }
		with $F_{[i,j]} = F_{[i,j]}(p)$ as in \eqref{eq:AblNotation}.
		\item[ii)]{In case that $\mathscr{C}$ is the graph of a twice differentiable function $f:\R \to \R$, i.e. $\mathscr{C} = \{(x_1, x_2) \in \R^2: x_2 = f(x_1)\}$, and $p=(x, f(x))$, the above reduces to 
			$$K(p) = \displaystyle \frac{f^{\prime\prime}(x)}{{\big(1 + f^\prime(x)^2\big)}^{3/2}}.$$}
	\end{itemize}
\end{lemma}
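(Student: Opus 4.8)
The statement to prove is Lemma~\ref{lem:Curvature}, which gives curvature formulas for implicit planar curves and for graphs. Both are asserted to follow from Goldman's general curvature formula in \cite[Proposition 3.1]{Goldman2005}, so the task is really a specialization/verification exercise rather than a deep argument. Let me sketch how I'd organize it.

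For part i), the plan is to recall that for a regular implicit curve $\mathscr{C}=\{F=0\}$ in the plane, the unit tangent at $p$ (where $\nabla F(p)\neq 0$) can be taken as $T=\tfrac{1}{\|\nabla F\|}(-F_{[0,1]},F_{[1,0]})$, obtained by rotating the gradient by $90^\circ$; the signed curvature is then $K = \langle \tfrac{dT}{ds}, N\rangle$ with $N=\tfrac{1}{\|\nabla F\|}(F_{[1,0]},F_{[0,1]})$. Differentiating along the curve and using that $\nabla F$ is constrained by $F=0$ produces, after collecting the second-order terms, the Hessian quadratic form of $F$ evaluated on the tangent direction, normalized by $\|\nabla F\|^3$. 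Writing this out gives exactly $\bigl(F_{[0,1]}^2 F_{[2,0]} - 2 F_{[0,1]}F_{[1,0]}F_{[1,1]} + F_{[1,0]}^2 F_{[0,2]}\bigr)/\bigl(F_{[1,0]}^2+F_{[0,1]}^2\bigr)^{3/2}$. The cleanest exposition is simply to quote \cite[Proposition 3.1]{Goldman2005} in the $2$-dimensional case (where the formula there specializes to this numerator over denominator, possibly up to a sign which is irrelevant since only $|K|$ enters the Weingarten map in \cite{AdrianiBaldi}) and note that this is that formula verbatim.

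For part ii), the plan is to reduce to part i) by writing the graph $\mathscr{C}=\{(x_1,x_2): x_2=f(x_1)\}$ as the zero set of $F(x_1,x_2):=f(x_1)-x_2$. Then one computes $F_{[1,0]}=f'(x_1)$, $F_{[0,1]}=-1$, $F_{[2,0]}=f''(x_1)$, $F_{[1,1]}=0$, $F_{[0,2]}=0$, and substitutes into the formula from part i). The numerator collapses to $(-1)^2 f''(x_1) - 0 + f'(x_1)^2\cdot 0 = f''(x_1)$ and the denominator to $(f'(x_1)^2+1)^{3/2}$, giving $K(p)=f''(x)/(1+f'(x)^2)^{3/2}$, which is the claimed classical formula. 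This step is entirely mechanical.

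The main (and only) genuine obstacle is the bookkeeping of signs and the precise normalization conventions: Goldman's formula, the signed-curvature convention, and the "absolute value of curvature equals the Weingarten map" identification used in \cite{AdrianiBaldi} must be reconciled so that the sign ambiguity is transparently harmless for the later application. I would address this by stating up front that curvature is understood up to sign (consistent with its role as a Weingarten map in Proposition~\ref{prop:DensityEstimateAdrianiBaldi} and the integral approximation), so that both displayed formulas are exact modulo an overall sign, and then the verification in parts i) and ii) is purely a substitution into \cite[Proposition 3.1]{Goldman2005}. No further subtlety arises; the rest is the routine differentiation indicated above.
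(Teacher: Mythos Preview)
Your proposal is correct and matches the paper's approach: the paper does not give a detailed proof of Lemma~\ref{lem:Curvature} either, but simply states that both formulas follow from \cite[Proposition 3.1]{Goldman2005}. Your plan to specialize Goldman's formula for part~i) and then substitute $F(x_1,x_2)=f(x_1)-x_2$ to obtain part~ii) is exactly the intended (and routine) verification, with your remarks on sign conventions being a harmless elaboration.
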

The last result we will present before the proof of our main result is the aforementioned Laplace integral approximation via the Weingarten map of Adriani and Baldi \cite{AdrianiBaldi}. The following proposition is the reduction of \cite[Section 4]{AdrianiBaldi} into a singular concise result, as in \cite[Lemma 5.6]{LiaoRamanan}, for $d=2$. 
Note, that for a set $D\subset \R^2$, we write $\partial D, \overline{D}, D^\circ,$ and $D^c$ for its boundary, closure, interior and complement, respectively. 
\begin{proposition} \label{prop:AdrianiBaldiWeingarten} Let $D\subset \R^2$ be a bounded domain such that $\partial D$ is a differentiable planar curve in $\R^2$. Furthermore, let $g:\R^2 \to \R$ be a differentiable function and $\phi: D \to [0, \infty)$ a nonnegative function that is twice differentiable and attains a unique infimum over $\overline{D}$ at $x^* \in \partial D$. Define the curves %
	$$\mathscr{C}_{D}=\partial D  \qquad \text{ and } \qquad \mathscr{C}_\phi=\{x \in \R^2: \phi(x) = \phi(x^*)\},$$
	and denote by $L_{D}$ and $L_\phi$ their respective Weingarten maps at $x^*$. Then it holds that 
	\begin{eqnarray*}
		&&\int_{D} g(x) \, e^{-n\, \phi(x)} \, \dint x\\
		 &&= \frac{{(2\pi)}^{1/2} \, \det(L_\phi^{-1}(L_\phi - L_{D}))^{-1/2}}{n^{3/2} \, \langle {\mathcal{H}_x\,\phi(x^*)}^{-1} \, \nabla_x \phi (x^*), \nabla_x \phi (x^*) \rangle^{1/2}} \, g(x^*) \, e^{-n \, \phi(x^*)}(1 + o(1)).
		\end{eqnarray*}
\end{proposition}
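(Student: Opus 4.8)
The plan is to reduce Proposition \ref{prop:AdrianiBaldiWeingarten} to the statement proved in \cite[Section 4]{AdrianiBaldi}, which treats Laplace-type integrals over a domain where the phase attains its minimum on the boundary, and to carry out the bookkeeping that specializes their general $d$-dimensional formula to $d=2$. First I would recall the geometric setup from \cite{AdrianiBaldi}: after a rotation of coordinates one may assume that $\nabla_x\phi(x^*)$ is normal to $\partial D$ at $x^*$ (this is forced by the Lagrange condition for the constrained minimum, since $x^*$ minimizes $\phi$ on $\overline D$ and lies on $\partial D$, so $\nabla_x\phi(x^*)$ is parallel to the outer normal of $D$), and that locally $\partial D$ and the level curve $\mathscr C_\phi=\{\phi=\phi(x^*)\}$ are graphs over the common tangent line at $x^*$. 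The key analytic input is the standard Laplace/Watson expansion: the mass of $\int_D g\, e^{-n\phi}$ concentrates in an $O(n^{-1/2})$-neighbourhood of $x^*$, one Taylor-expands $\phi$ to second order, and the Gaussian integral over the half-space cut out by the (linearized) boundary produces a factor involving the Hessian $\mathcal H_x\phi(x^*)$ restricted in the direction of $\nabla_x\phi(x^*)$.

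The core of the argument is to identify the three ingredients in the formula. The factor $\langle \mathcal H_x\phi(x^*)^{-1}\nabla_x\phi(x^*),\nabla_x\phi(x^*)\rangle^{1/2}$ in the denominator is the ``effective variance'' in the normal direction, arising because the linear term in the Taylor expansion of $\phi$ along the boundary is $\langle \nabla_x\phi(x^*),\,\cdot\,\rangle$ and integrating $e^{-ns}$ against the induced one-dimensional measure on the normal ray contributes $n^{-1}$ times the reciprocal of that quantity; combined with the $n^{-1/2}$ from the tangential Gaussian integral this yields the $n^{-3/2}$ power of $n$. The determinant factor $\det(L_\phi^{-1}(L_\phi-L_D))^{-1/2}$ is where the geometry of both curves enters: in $d=2$ both Weingarten maps are scalars (the absolute curvatures, by the remark preceding Lemma \ref{lem:Curvature}), and the difference $L_\phi-L_D$ measures how much the level curve $\mathscr C_\phi$ bends away from $\partial D$ at $x^*$ — this is exactly the second-order coefficient governing how fast $\phi$ grows as one moves along $\partial D$ away from $x^*$. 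Here I would invoke \cite[Section 4]{AdrianiBaldi} directly for the derivation of this determinant, since that is precisely the content of their main geometric lemma, and our Proposition is just its $d=2$ instance rewritten using Lemma \ref{lem:Curvature} to express curvatures. Finally $g(x^*)e^{-n\phi(x^*)}$ is the leading constant, with $g$ continuous so its value freezes at $x^*$ up to $1+o(1)$.

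The main obstacle, and the step requiring the most care, is verifying that the hypotheses of \cite{AdrianiBaldi} are genuinely met here and that the quoted formula is stated with the correct constants: one must check that the unique infimum of $\phi$ on $\overline D$ indeed occurs on $\partial D$ (not in $D^\circ$, where the boundary term would not appear), that $\partial D$ is smooth and strictly ``transversal'' to the level sets near $x^*$ so that the graph reductions are valid, and that the non-degeneracy $L_\phi-L_D\neq 0$ holds so the determinant is finite and nonzero. For the application in Section \ref{sec:ProofMainResult} these will be consequences of the strict convexity/analyticity of $\Lambda_p^*$ near $\theta^*$ established in \cite{KPV} together with the explicit shape of the boundary $\{e^{x_1}x_2^{-1/p}=\theta\}$, but at the level of this abstract proposition I would simply list them as standing assumptions (boundedness of $D$, differentiability of $\partial D$, twice-differentiability of $\phi$, uniqueness of the boundary minimizer) and refer to \cite{AdrianiBaldi} for the proof, noting as they do in \cite[Remark 3.2]{AdrianiBaldi} that one only needs enough regularity to run the saddle-point/Laplace expansion. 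The remaining work is then purely notational: matching their $d$-dimensional Hessian and Weingarten expressions to the scalar $d=2$ quantities $\mathfrak H_x$, $L_D$, $L_\phi$ used here, and confirming the power of $2\pi$ and of $n$ by comparing with the already-quoted \cite[Lemma 5.6]{LiaoRamanan}.
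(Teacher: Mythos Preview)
Your proposal is correct and matches the paper's treatment: the paper does not supply an independent proof of this proposition but presents it as the $d=2$ specialization of \cite[Section~4]{AdrianiBaldi}, already packaged in \cite[Lemma~5.6]{LiaoRamanan}, which is precisely the reduction you outline. Your additional explanatory remarks about the Lagrange condition, the tangential/normal splitting, and the role of $L_\phi-L_D$ go beyond what the paper records, but the underlying approach---cite \cite{AdrianiBaldi} and match constants in dimension two---is the same.
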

At this point, we have gathered the appropriate tools we need to proceed with proving our main result. 
\begin{proof}[Proof of Theorem \ref{thm:AGM-SLDP}] 
	We begin by proving the statement in Theorem \ref{thm:AGM-SLDP} \textup{i)}. Let us assume the setting therein and let $B_\theta$ be an open neighbourhood of $\theta^*$, small enough such that $B_\theta \subset \mathcal{J}_p$. The fact that $\theta^* \in \mathcal{J}_p$ follows from the fact that $\Lambda_p^*(\theta^*)= \mathcal{I}_p(\theta)< \infty$ for $\theta \in (0,1)$, as seen in Proposition \ref{prop:LDP-Rn}. Splitting up the reformulation of our target probability in \eqref{eq:ReformTargetProbAsDensityInt>} into integrals of $h^{(n)}$ over $B_\theta$ and $B_\theta^c$ yields
	\begin{eqnarray} \label{eq:SplitIntegral}
	\Pro(\mathcal{R}_n > \theta) &=& \int_{D_{\theta, >} \cap B_\theta} h^{(n)}(x) \, \dint x + \int_{D_{\theta, >} \cap B^c_\theta} h^{(n)}(x) \, \dint x.
	\end{eqnarray}
	We begin by showing the comparative negligibility of the second integral term. We know from Remark \ref{rmk:UniquenessInf} that $\Lambda_p^*$ attains its unique infimum on $\partial D_{\theta, >}$ at $\theta^*$. This property can be shown to hold for the closure $\overline{D}_{\theta, >}$ as follows: assume $t\in\R^2$ with $t \in D_{\theta, >}^\circ$, i.e. $e^{t_1}t_2^{-1/p}>\theta$. We then consider $\tilde \theta := e^{t_1}t_2^{-1/p}$. If $\tilde \theta^* \notin \mathcal{J}_p$, it trivially holds that $\Lambda_p^*(\theta^*) < \Lambda_p^*(t) = \infty$. Hence, assume that $\tilde \theta^* \in \mathcal{J}_p$. It now follows that $t \in \partial D_{\tilde \theta, >}$, which yields that $\Lambda_p^*(t) > \Lambda_p^*(\tilde \theta^*) = \mathcal{I}_p(\tilde \theta)$ by Remark \ref{rmk:UniquenessInf}. By the same arguments as in \cite[p. 375]{AdrianiBaldi}, we know that the Hessian of $\Lambda_p^*$ is strictly positive definite on its effective domain $\mathcal{J}_p$, and therefore $\Lambda_p^*$ is strictly convex on $\mathcal{J}_p$. From Proposition \ref{prop:LDP-Rn} we have that $\Lambda_p^*({e^{m_p}}^*) = \mathcal{I}_p(e^{m_p}) = 0$, thus $\mathcal{I}_p$ is strictly increasing on $(e^{m_p}, 1)$, thus for $\tilde \theta > \theta$ we have $\Lambda_p^*(t) >  \mathcal{I}_p(\tilde \theta) > \mathcal{I}_p(\theta) = \Lambda_p^*(\theta^*)$, thereby proving that $\Lambda_p^*$ attains its unique infimum on $\overline{D}_{\theta, >}$ at $\theta^*$. Therefore, it follows from $\theta^* \notin B_\theta^c$ that there is an $\eta >0$, such that
	$$ \inf_{t \in D_{\theta} \cap \, B_\theta^c} \Lambda_p^*(t) >\Lambda_p^*(\theta^*) + \eta.$$
	The LDP in Proposition \ref{prop:LDP-Rn} then implies that
	$$ \limsup_{n \to \infty} \frac{1}{n} \log \Pro(S^{(n)} \in D_{\theta} \cap \,B_\theta^c) \le - \inf_{y \, \in \,  D_{\theta} \cap \, B_\theta^c} \Lambda_p^*(y) \le - \Lambda_p^*(\theta^*) - \eta,$$
	from which it follows that
	\begin{equation} \label{eq:PropRepIntCompl}
	\Pro \left(S^{(n)} \in D_{\theta} \cap \, B_\theta^c \right) \le e^{-n \, \Lambda_p^*(\theta^*) - n \, \eta} \,  (1+ o(1)) = \frac{1}{e^{n \,\eta}} \, e^{-n \Lambda_p^*(\theta^*)} (1+ o(1)) .
	\end{equation} 
	Due to the leading exponential term $e^{-n \,\eta}$, the above will be comparatively negligible compared to the other integral term %
	\begin{eqnarray} \label{eq:DensityIntegralNeighbourhoodPreAdreaniBaldi}
	\nonumber &&\int_{D_{\theta, >} \cap B_\theta} h^{(n)}(x) \, \dint x\\
	&& =  \frac{n}{2\pi} \, \int_{D_{\theta, >} \cap B_\theta} {(\det \mathfrak{H}_{x})}^{-1/2} \,  e^{- n \, \Lambda_p^*(x)} \, \dint x \, (1 + o(1)),
	\end{eqnarray}
	which we will concretely calculate in the following. The clear course of action for this will be to apply Proposition \ref{prop:AdrianiBaldiWeingarten} to the integral in \eqref{eq:DensityIntegralNeighbourhoodPreAdreaniBaldi} with $D = D_{\theta, >} \cap \, B_\theta \subset \R^2$, $x^* = \theta^*$, $g(x):= {(\det \mathfrak{H}_{x})}^{-1/2} $ and $\phi(x) = \Lambda_p^*(x)$. However, we first need to check whether the conditions of Proposition \ref{prop:AdrianiBaldiWeingarten} indeed hold. The area of integration is clearly bounded and since for sufficiently small $B_\theta$ it follows from \eqref{eq:DefD_theta>} that $\partial(D_{\theta, >} \cap B_\theta)$ around $\theta^*$ is a section of the graph of the differentiable function $f(t_1)= \theta^{-p}e^{pt_1}$, it is indeed a differentiable planar curve. For the twofold differentiability of $\Lambda_p^*$ we refer to the argument in the proof of the main result in \cite[p.\,\,259]{TKSLDP} based on properties of the moment generating function and the Legendre-Fenchel transform and the implicit function theorem, as it can be applied in the same fashion here to show infinite differentiability of $\Lambda_p^*$. This, in turn, also yields the infinite differentiability of $ \mathfrak{H}_{x} =  \mathcal{H}_{\tau}\Lambda_p(\tau(x))$, and hence $g(x):= {(\det \mathfrak{H}_{x})}^{-1/2} $ is differentiable. As $\Lambda_p^*$ is a rate function for the LDP satisfied by $S^{(n)}$, it follows by the standard properties of rate functions that it is non-negative. Finally, as was shown in \cite{KPV}, it attains a unique infimum on $\partial(D_{\theta, >} \cap B_\theta)$ in $\theta^*$ (see \eqref{eq:EindInfLambda*} and Remark \ref{rmk:UniquenessInf}), which also holds for the entirety of $\overline{D_{\theta, >} \cap B_\theta}$, as was shown above. Hence, we can apply Proposition \ref{prop:AdrianiBaldiWeingarten} as intended, which gives
	\begin{eqnarray}\label{eq:IntPostWeingartenAMG1}
	\nonumber && \displaystyle  \int_{D_{\theta, >} \cap B_\theta} h^{(n)}(x) \, \dint x \\
	&& =   \displaystyle \frac{1}{\sqrt{2\pi n}} \, \frac{ \det(L_\Lambda^{-1}(L_\Lambda - L_{D}))^{-1/2} \, e^{-n \, \Lambda_p^*(\theta^*)}}{ \langle {\mathcal{H}_x\,\Lambda_p^* (\theta^*)}^{-1} \, \nabla_x\,\Lambda_p^*(\theta^*), \nabla_x\,\Lambda_p^*(\theta^*) \rangle^{1/2} \,  (\det \mathfrak{H}_{\theta^*})^{1/2}} \, (1 + o(1)), 
	\end{eqnarray}
	where $L_\Lambda$ and $L_{D}$ are the respective Weingarten maps of the curves
	$$\mathscr{C}_{D}=\partial (D_{\theta, >} \cap \, B_\theta)  \qquad \text{ and } \qquad \mathscr{C}_{\Lambda}=\{x \in \R^2: \Lambda_p^*(x) = \Lambda_p^*(\theta^*)\}$$
	at $\theta^*$. We now need to resolve the different components in this fraction. It was shown in \cite[Lemma 21]{TKSLDP} that $\nabla_{x} \Lambda_p^*(x) = \tau(x),$ and  $\mathcal{H}_{x}  \Lambda_p^*(x) = \mathfrak{H}_{x}^{-1}$, which holds in our setting by the very same arguments as presented therein. This allows rewriting the term in the denominator in \eqref{eq:IntPostWeingartenAMG1} as %
	\begin{eqnarray} \label{eq:XiFormel} 
	\nonumber \qquad \qquad &&\Big\langle {\mathcal{H}_x\,\Lambda_p^*(\theta^*)}^{-1} \, \nabla_x\,\Lambda_p^*(\theta^*), \nabla_x\,\Lambda_p^*(\theta^*) \Big\rangle \det \mathfrak{H}_{\theta^*} \qquad \qquad \qquad \qquad\\
	  \qquad \qquad&&\qquad \qquad \qquad \quad= \Big\langle \mathfrak{H}_{\theta^*} \, \tau(\theta^*), \tau(\theta^*) \Big\rangle \det \mathfrak{H}_{\theta^*} = \xi(\theta)^2.
	\end{eqnarray}
	In the following we shall give the Weingarten maps of the curves $\mathscr{C}_{D}$ and $\mathscr{C}_{\Lambda}$ explicitly and see that $\det(L_\Lambda^{-1}(L_\Lambda - L_{D})) = 1- {L_D}/{L_\Lambda} = \kappa(\theta)^2$ (the determinant falling away due to the Weingarten maps being one-dimensional). As discussed at the beginning of the section, the Weingarten map of a planar curve at a given point reduces to the absolute value of its curvature at that point, for which we have given concrete formulas in Lemma \ref{lem:Curvature}. As $\mathscr{C}_{D}=\partial (D_{\theta, >} \cap \, B_\theta)$ around $\theta^*$ is a segment of the graph of $f(t_1)= \theta^{-p}e^{pt_1}$, we get from Lemma \ref{lem:Curvature} ii) that 
	\begin{equation}\label{eq:WeingartenD}
	L_D = \frac{|f^{\prime \prime} (\theta)|}{(1 + f^{\prime}(\theta)^2)^{3/2}} = \frac{p^2 e^{p \theta} \theta^{-p}}{\left(1+ p^2 e^{2 p \theta} \theta^{-2 p}\right)^{3/2}}.
	\end{equation}
	The curve $\mathscr{C}_{\Lambda}$ can be written as the zero set of the function $F(x) := \Lambda_p^*(x) - \Lambda_p^*(\theta^*)$, and its derivatives $F_{[i,j]}$ at $\theta^*$ as in Lemma \ref{lem:Curvature} i) are known from the identities $\nabla_{x} \Lambda_p^*(x) = \tau(x)$ and  $\mathcal{H}_{x}  \Lambda_p^*(x) = {\mathfrak{H}_{x}}^{-1}$ from \cite[Lemma 21]{TKSLDP}. (Note, that for $\theta=e^{m_p}$ we have that $\mathscr{C}_{\Lambda}$ is the zero set of $F(x) = \Lambda_p^*(x)$, since $\Lambda_p^*({e^{m_p}}^*) =0$. By \eqref{eq:ValuesTau} it follows that $\tau(x)=0$ only if $x= {e^{m_p}}^*$. Hence, the zero set of $F(x)=\Lambda_p^*(x)$ is solely ${e^{m_p}}^*$, which is not a differentiable curve, and hence is not accessible by these geometric methods). It thus follows that
	\begin{equation*}
	L_\Lambda =  \displaystyle \frac{\left |\tau(\theta^*)_2^2 \left(\mathfrak{H}_{\theta^*}^{-1}\right)_{11} - 2\tau(\theta^*)_1 \tau(\theta^*)_2 \left(\mathfrak{H}_{\theta^*}^{-1}\right)_{12} + \tau(\theta^*)_1^2 \left(\mathfrak{H}_{\theta^*}^{-1}\right)_{22}\right| }{{\big(\tau(\theta^*)_1^2 + \tau(\theta^*)_2^2\big)}^{3/2}}.
	\end{equation*}
	This, together with \eqref{eq:WeingartenD}, now yields that $1- {L_D}/{L_\Lambda} = \kappa(\theta)^2$, which combined with \eqref{eq:EindInfLambda*} and \eqref{eq:XiFormel} gives 
	\begin{eqnarray}\label{eq:PropRepIntWithin}
	\displaystyle \int_{D_{\theta, >} \cap \, B_\theta} h^{(n)}(x) \, \dint x &= \displaystyle \frac{1}{\sqrt{2\pi n} \, \xi(\theta) \, \kappa(\theta)}   e^{-n \mathcal{I}_p(\theta)} \, (1 + o(1)).
	\end{eqnarray}
	Comparing  \eqref{eq:PropRepIntWithin} with the upper bound of the integral outside of $B_\theta$ in \eqref{eq:PropRepIntCompl}, we can see that the integral over $B_\theta^c$ is negligible as it is of order $o(1)$. Thus, combining \eqref{eq:SplitIntegral}, \eqref{eq:PropRepIntCompl} and \eqref{eq:PropRepIntWithin} finishes the proof of Theorem \ref{thm:AGM-SLDP} i).\\
	\\

	The proof of Theorem \ref{thm:AGM-SLDP} \textup{ii)} is almost completely the same regarding probabilistic representation, local density estimation and integral approximation, as hardly any of the steps therein use the fact that we are working on $D_{\theta, >}$ for $\theta \in (e^{m_p}, 1)$ instead of $D_{\theta, <}$ for $\theta \in (0, e^{m_p})$, but rather consider a neighbourhood of $\partial D_{\theta, >}$ around $\theta^*$, which coincides with that same neighbourhood of $\partial D_{\theta, <}$  around $\theta^*$, and are therefore the same in both settings. The only notable difference is that one shows the fact that $\theta^*$ minimizes $\Lambda_p^*$ not only on  $\partial D_{\theta, <}$, as in \eqref{eq:EindInfLambda*}, but also on $\overline{D}_{\theta, <}$, by using the fact that $\Lambda_p^*$ is strictly decreasing on $(0, e^{m_p})$ instead of it being strictly increasing on $\theta \in (e^{m_p}, 1)$. Beyond this, the proof is the same as for Theorem \ref{thm:AGM-SLDP} \textup{i)} and is hence omitted here. 
\end{proof}

\bibliographystyle{amsplain}

\end{document}